\algnewcommand{\algorithmicand}{\textbf{ and }}
\algnewcommand{\algorithmicor}{\textbf{ or }}
\algnewcommand{\OR}{\algorithmicor}
\algnewcommand{\AND}{\algorithmicand}
\algnewcommand{\var}{\texttt}
\algnewcommand\True{\textbf{true}\space}
\algnewcommand\False{\textbf{false}\space}
\definecolor{webgreen}{rgb}{0,.5,0}
\definecolor{webbrown}{rgb}{.6,0,0}
\begin{document}
	
\theoremstyle{plain}
\newtheorem{theorem}{Theorem}
\newtheorem{corollary}[theorem]{Corollary}
\newtheorem{lemma}[theorem]{Lemma}
\newtheorem{proposition}[theorem]{Proposition}

\theoremstyle{definition}
\newtheorem{definition}[theorem]{Definition}
\newtheorem{example}[theorem]{Example}
\newtheorem{conjecture}[theorem]{Conjecture}

\theoremstyle{remark}
\newtheorem{remark}[theorem]{Remark}

\begin{center}
\vskip 1cm{\LARGE\bf Odd Spoof Multiperfect Numbers Of Higher Order} 
\vskip 1cm \large
L\'aszl\'o T\'oth\\
L-8476 Eischen \\
Grand Duchy of Luxembourg\\
\href{mailto:uk.laszlo.toth@gmail.com}{\tt uk.laszlo.toth@gmail.com}
\end{center}
\vskip .2 in

\begin{abstract}
We extend our previous work on odd spoof multiperfect numbers to the case where spoof factor multiplicities exceed $2$. This leads to the identification of $11$ new integers that would be odd multiperfect numbers if one of their prime factors had higher multiplicity. An example is $181545$, which would be an odd multiperfect number if only one of its prime
factors, $3$, had multiplicity $5$.
\end{abstract}
	
\section{Introduction}
Recall that $\sigma(n)$ denotes the sum-of-divisors function of the positive integer $n$, and $n$ is said to be \textit{perfect} if $\sigma(n)=2n$, and multiperfect (or $k$-perfect) if $\sigma(n) = kn$ for some positive integer $k\geq2$. No odd perfect numbers have been found so far, but Descartes observed that
$$
\mathcal{D}=198585576189
$$
would be an odd perfect number if only one of its composite factors, $22021$, were prime. Regrettably, $22021=19^2 \cdot 61$, so this is not the case. Since Descartes, much effort has been devoted to finding such ``spoof perfect'' numbers, without success. In our previous paper \cite{Toth25}, we presented a few numbers akin to $\mathcal{D}$, for instance
$$
S = 8999757 = 3^2 \cdot 13^2 \cdot 61 \cdot 97,
$$
which would be an odd multiperfect number if we assumed (wrongly) that one of its prime factors, $61$, were a square. Indeed, if that were the case, we would have
\begin{align*}
\sigma(S) &= (3^2 + 3 + 1) (13^2 + 13 + 1) (97 + 1) (61^2 + 61 + 1) \\
&= (13) \cdot (3\cdot61) \cdot (2 \cdot 7^2) \cdot ( 3 \cdot 13 \cdot 97) \\
&= 98 \cdot 3^2 \cdot 13^2 \cdot 61 \cdot 97 \\
&= 98 S.
\end{align*}
This led us to devise an algorithm to search for such numbers and found several more. In this paper, our aim is to develop our methods even further; first, by generalizing the concept of spoof $k$-perfect numbers, and second, by extending our search for numbers similar to $\mathcal{D}$ and $S$. As a result, we find $11$ new odd positive integers which would be multiperfect, if only one of their prime factors had higher multiplicity. One such example is
$$
T = 181545 = 3 \cdot 5 \cdot 7^2 \cdot 13 \cdot 19,
$$
which would be an odd multiperfect number, if only one of its prime factors, $3$ had multiplicity $5$:
\begin{align*}
\sigma(T) &= (3^5 + 3^4 + 3^3 + 3^2 + 3 + 1) (5 + 1) (7^2 + 7 + 1) \\
& \ \ \ \ \ (13 + 1) (19 + 1) \\
&= (2^2 \cdot 7 \cdot 13) (2 \cdot 3) (3 \cdot 19) (2 \cdot 7) (2^2 \cdot 5) \\
&= 192 \cdot 3 \cdot 5 \cdot 7^2 \cdot 13 \cdot 19 \\
&= 192 T.
\end{align*}
In the next sections, we will provide a generalization of spoof multiperfect numbers, and discuss some of their properties. We will then adapt Robin's classical inequality to spoof multiperfect numbers and provide details about the algorithm we used to find our results, including pseudo-code.

\section{Generalized spoof multiperfect numbers}
In our previous paper \cite{Toth25}, we defined two kinds of spoof multiperfect numbers. In particular, we designated the positive integer $s=nx$ as a spoof $k$-perfect number
\begin{enumerate}
    \item \textit{of the first kind} if $\sigma(n) (x+1) = knx$,
    \item \textit{of the second kind} if $\sigma(n) (x^2+x+1) = knx$,
\end{enumerate}
for a positive integer $k\geq2$. We shall now introduce an extension of this definition by allowing the spoof factor $x$ to have any multiplicity greater than $2$.
\begin{definition} [Spoof $k$-perfect number of order $\alpha$] \label{def-spoof} 
Let $s=nx$ be a positive integer such that $n,x \in \mathbb{N}$ and $n,x\geq2$. Furthermore, let $\alpha\geq1$ be an integer and define
$$
\mathcal{S}_\alpha = \sum_{a=0}^{\alpha} x^a. 
$$
Then, if
$$
\sigma(n) \mathcal{S}_\alpha = knx,
$$
for some positive integer $k$, then $s$ is a spoof $k$-perfect number \textit{of order} $\alpha$.
\end{definition}
Note that the case $\alpha = 1$ corresponds to the classical Descartes numbers, the case $\alpha=2$ to the numbers in our previous work (such as $8999757$), while the cases $\alpha>2$ form the basis of our study in this paper. A trivial example of an odd spoof $k$-perfect number of order $3$ is $s=15$. Indeed, if we assume (incorrectly) that its prime factor $3$ has multiplicity $3$, then we have
\begin{align*}
\sigma(s) &= (5 + 1) \cdot (3^3 + 3^2 + 3 + 1) \\
&= 2^4 \cdot 3 \cdot 5 \\
&= 16 s.
\end{align*}
In hopes of finding such numbers, we implemented an algorithm that finds all spoof multiperfect numbers of order $\alpha$ within a given range, which we outline in Section \ref{sec-algo} We were thus able to check all integers $s=nx$ with $n<1.6 \times 10^7$, of order $\alpha\leq10$. As a result, we found $14$ spoof multiperfect numbers, $11$ of which are new, for which $x$ is a prime that is also coprime to $n$. These integers are listed in Table \ref{tab-res}.
\begin{center}
\begin{table}[h!]
\centering
\caption{Odd spoof $k$-perfect numbers $s=nx$ of order $\alpha$}\label{tab-res}
\begin{tabular}{|c|c|c||c|c|}
\hline
$ s$  & $n$ & $x$ & $k$ & $\alpha$ \\ \hline
$15$  & $5$ & $3$ & $16$ & $3$ \\ \hline
$33$  & $11$ & $3$ & $44$ & $4$ \\ \hline
$1911$  & $637$ & $3$ & $152$ & $5$ \\ \hline
$1989$  & $153$ & $13$ & $280$ & $3$ \\ \hline
$34485$  & $11495$ & $3$ & $56$ & $4$ \\ \hline
$36309$  & $12103$ & $3$ & $160$ & $5$ \\ \hline
$77805$  & $11115$ & $7$ & $16$ & $2$ \\ \hline
$92781$  & $1521$ & $61$ & $97$ & $2$ \\ \hline
$105435$  & $21087$ & $5$ & $256$ & $4$ \\ \hline
$181545$  & $60515$ & $3$ & $192$ & $5$ \\ \hline
$241395$  & $80465$ & $3$ & $64$ & $4$ \\ \hline
$8999757$  & $147537$ & $61$ & $98$ & $2$ \\ \hline
$62998299$  & $1032759$ & $61$ & $112$ & $2$ \\ \hline
$440988093$  & $7229313$ & $61$ & $114$ & $2$ \\ \hline  
\end{tabular}
\end{table}
\end{center}
Note that the integers $s=77805, 92781$, and $8999757$ have already been discovered in our previous paper. We also note that the numbers $s=62998299$ and $440988093$ are remarkable because they also have $x=61$, which now accounts for the majority of odd spoof multiperfect numbers of order $2$. It also appears in Descartes' classical example $s=198585576189$, which is the only known odd spoof perfect number of order $1$.

Many other odd spoof multiperfect numbers exist, for which $x$ is either composite, or prime but not coprime to $n$. We have ommitted these numbers from the results that we share in this paper.

As we noted in our previous paper, one may notice at this point that multiperfect numbers of this magnitude should not appear so early, due to an inequality discovered by Guy Robin \cite{Ro84} in 1984, namely
$$
\sigma(n) < e^\gamma n \log \log n,
$$
where $\gamma$ is the Euler-Mascheroni constant and $n > 5040$, if and only if the Riemann Hypothesis holds true. It thus follows that we would expect a $k$-perfect number $n$ to appear only after
$$
n > e^{e^{k e^{-\gamma}}},
$$
which is not the case in the spoof examples above. This observation leads us to examine the ``spoof equivalent'' of this inequality, which we will do in the next section.

\section{Robin's inequality for spoof multiperfect numbers}
We begin by adapting Robin's inequality to spoof $k$-perfect numbers in the following manner.
\begin{lemma} \label{lemm-robin}
Let $s=nx$ denote a spoof $k$-perfect number of order $\alpha$. Furthermore, let $n>5040$. Then, assuming the Riemann Hypothesis, we have:
$$
\frac{kx}{\mathcal{S}_\alpha} < e^{\gamma} \log \log n,
$$
where
$$
\mathcal{S}_\alpha = \sum_{a=0}^{\alpha} x^a. 
$$
\end{lemma}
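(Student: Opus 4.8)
The plan is to reduce the statement directly to Robin's inequality applied to the genuine integer $n$. The key observation is that although $s = nx$ is merely a spoof $k$-perfect number, the factor $n$ is an ordinary positive integer, so its sum-of-divisors function $\sigma(n)$ behaves in the usual way, and Robin's inequality applies to it without any modification. All the ``spoofing'' is confined to the factor $x$ and the term $\mathcal{S}_\alpha$, which play no role in the analytic estimate itself.

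First I would rewrite the defining relation $\sigma(n)\,\mathcal{S}_\alpha = knx$ from Definition \ref{def-spoof} by dividing both sides by $n\,\mathcal{S}_\alpha$, which yields
$$
\frac{\sigma(n)}{n} = \frac{kx}{\mathcal{S}_\alpha}.
$$
This expresses the spoof quantity $kx/\mathcal{S}_\alpha$ purely in terms of the honest abundancy ratio $\sigma(n)/n$, effectively translating a statement about the spoof number into a statement about the genuine integer $n$.

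Next, since $n > 5040$ by hypothesis and we are assuming the Riemann Hypothesis, Robin's inequality $\sigma(n) < e^{\gamma} n \log\log n$ holds. Dividing through by $n$ gives $\sigma(n)/n < e^{\gamma}\log\log n$, and combining this with the identity above immediately produces
$$
\frac{kx}{\mathcal{S}_\alpha} = \frac{\sigma(n)}{n} < e^{\gamma}\log\log n,
$$
which is precisely the claimed inequality.

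There is essentially no analytic obstacle here: the entire content is the algebraic rearrangement of the defining equation together with the recognition that Robin's bound should be applied to $n$ rather than to the spoof number $s=nx$. The one point requiring care is to confirm that it is the hypothesis $n > 5040$ (a condition on the honest factor $n$, not on $s$ or on $\mathcal{S}_\alpha$) that licenses the use of the conditional form of Robin's inequality, and to emphasize that $\sigma(n)$ here denotes the \emph{actual} sum-of-divisors of $n$, unaffected by the fictitious higher multiplicity assigned to $x$.
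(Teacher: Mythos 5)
Your proposal is correct and follows essentially the same argument as the paper: rearrange the defining relation $\sigma(n)\,\mathcal{S}_\alpha = knx$ and apply Robin's inequality to the honest integer $n>5040$, then cancel $n$. The only cosmetic difference is that you divide by $n\mathcal{S}_\alpha$ first, whereas the paper solves for $\sigma(n)$ and cancels $n$ at the end.
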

\begin{proof}
Let $s=nx$ denote a spoof $k$-perfect number of order $\alpha$. Thus, by Definition \ref{def-spoof}, we have
$$
\sigma(n) = \frac{knx}{\mathcal{S}_\alpha}.
$$
On the other hand, Robin's inequality states that for $n>5040$,
$$
\sigma(n) < e^{\gamma} n \log \log n.
$$
Combining these two above gives
$$
\frac{knx}{\mathcal{S}_\alpha} < e^{\gamma} n \log \log n,
$$
and after simplifying $n$ on both sides our claim is proved.
\end{proof}
A quick corollary of the above gives a bound on the components of the classical Descartes numbers.
\begin{corollary}
Let $s=nx$ denote a Descartes number with pseudo-prime factor $x$. Then, assuming the Riemann Hypothesis, we have:
$$
\frac{2x}{x+1} < e^{\gamma} \log \log n.
$$
\end{corollary}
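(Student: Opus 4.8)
The plan is to specialize Lemma~\ref{lemm-robin} to the case $\alpha = 1$, which is precisely the order corresponding to the classical Descartes numbers. A Descartes number is a spoof perfect number, meaning it is a spoof $k$-perfect number with $k = 2$; this is the defining property of perfection, where $\sigma$ doubles the number. The pseudo-prime factor $x$ plays the role of the spoof factor in Definition~\ref{def-spoof}.

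First I would recall from the discussion following Definition~\ref{def-spoof} that the case $\alpha = 1$ gives $\mathcal{S}_1 = \sum_{a=0}^{1} x^a = 1 + x = x + 1$. Next I would substitute the two values $k = 2$ and $\alpha = 1$ directly into the conclusion of Lemma~\ref{lemm-robin}. That lemma asserts, under the Riemann Hypothesis and for $n > 5040$, that
$$
\frac{kx}{\mathcal{S}_\alpha} < e^{\gamma} \log \log n.
$$
Replacing $k$ by $2$ and $\mathcal{S}_\alpha$ by $x + 1$ yields exactly the claimed inequality
$$
\frac{2x}{x+1} < e^{\gamma} \log \log n.
$$

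The proof is therefore essentially immediate, amounting to a verification that Descartes numbers satisfy the hypotheses of Lemma~\ref{lemm-robin} with the stated parameter values. The main point to be careful about is the bookkeeping of definitions: one must confirm that ``Descartes number'' indeed means a spoof \emph{perfect} number (so that $k = 2$) and of order $1$ (so that $\alpha = 1$ and $\mathcal{S}_1 = x+1$), and that the Riemann Hypothesis and the size condition $n > 5040$ are inherited as standing assumptions. I do not anticipate any genuine analytic obstacle here, since all the real work—combining Robin's inequality with the spoof-perfection identity and cancelling the common factor $n$—has already been carried out in the proof of Lemma~\ref{lemm-robin}. The corollary is simply the instance of that lemma at the historically most important parameter values.
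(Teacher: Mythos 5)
Your specialization of Lemma~\ref{lemm-robin} at $k=2$ and $\alpha=1$ is exactly the paper's argument, and the algebra ($\mathcal{S}_1 = x+1$, so $kx/\mathcal{S}_\alpha$ becomes $2x/(x+1)$) is correct. However, there is one genuine discrepancy in your handling of the hypotheses. You state that the size condition $n>5040$ is ``inherited as a standing assumption,'' but look at the corollary's statement: unlike the lemma, it does \emph{not} assume $n>5040$. The corollary is asserted for every Descartes number, unconditionally on the size of $n$. As written, your argument therefore only proves the weaker statement in which $n>5040$ is added as an extra hypothesis.

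To prove the corollary as stated, you must discharge that hypothesis, and this is precisely the one piece of content in the paper's proof beyond the substitution: the paper notes that no Descartes numbers exist with $n$ smaller than $5040$ (Descartes' own example has $n = 9018009$, and no smaller ones are known or possible in this range), so the restriction in Lemma~\ref{lemm-robin} is vacuous for Descartes numbers and can be dropped from the conclusion. Adding that one observation closes the gap; everything else in your write-up matches the paper's proof.
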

\begin{proof}
This follows directly by applying Lemma \ref{lemm-robin} with $k=2$ and $\alpha=1$. Furthermore, we no longer need the restriction $n>5040$, since no Descartes numbers exist with $n$ smaller than $5040$.
\end{proof}

\section{Algorithm} \label{sec-algo}

In this small final section, we give a few details about the algorithm we used to find the results in this paper, which is very similar to the one in our previous work \cite{Toth25}. We run through positive integers $n$ and compute the quantity
$$
q = \frac{\sigma(n)}{kn}.
$$
Taking care that the fraction $q$ is in the lowest terms possible (i.e., the numerator $q_{num}$ and denominator $q_{den}$ have greatest common divisor $1$), we can compute their difference $\delta$:
$$
\delta = q_{den} - q_{num}.
$$
Then if 
$$
\delta = \sum_{a=0}^{\alpha} q_{num}^a - q_{num},
$$
we have found a spoof $k$-perfect number $s=nx$ of order $\alpha$, where the spoof factor is $x=q_{num}$.

In practical terms, we can check whether the positive integer $n$ is a suitable candidate as illustrated by the following pseudo-code.

\begin{algorithm}[H]
    \caption{Check whether a positive integer $n$ is an odd spoof $k$-perfect number of order $\alpha<\alpha_{max}$}\label{algo1}
    
    \begin{algorithmic}[0]

        \Procedure{CheckCandidate}{$n$, $\sigma_n$, $\alpha_{max}$, $k$}
        \State $q \gets \sigma_n /(k \times n)$
        \State \texttt{Reduce}$[q]$
        \State $num \gets $ \texttt{Numerator}$[q]$
        \State $den \gets $ \texttt{Denominator}$[q]$
        \State $delta \gets den - num$
        \For{$\alpha=1\xrightarrow{} \alpha_{max}$}
            \State $S_\alpha \gets $ \texttt{ComputeAlphaSum}[$n, \alpha$]

            \If{$delta == (S_{\alpha} - num) \AND num>1$}
                \State $s \gets n \times num$
                \If{\texttt{Mod}$[s,2] == 1$}
                    \State \texttt{Print}[''Found at '' + $s$]
                \EndIf
            \EndIf

        \EndFor
    \EndProcedure
    \end{algorithmic}
\end{algorithm}
Note that the call to \texttt{Reduce}$[q]$ ensures that the fraction $q$ is reduced to the lowest terms, as mentioned above. Furthermore, the \texttt{ComputeAlphaSum} function is a simple computer implementation of the sum we defined previously,
$$
\mathcal{S}_\alpha = \sum_{a=0}^{\alpha} x^a.
$$
Putting everything together, we iterate through our search space in the following manner.
\begin{algorithm}[H]
    \caption{Finding spoof multiperfect numbers of different orders, given the defined limits $n_{max}, k_{max}$, and $\alpha_{max}$}\label{algo2}
    \begin{algorithmic}[0]
    
        \Procedure{Main}{$n_{max}, k_{max}, \alpha_{max}$}
        
        \For{$n=1\xrightarrow{} n_{max}$}
            \State $\sigma_n \gets$\texttt{DivisorSigma}$[n]$

            \For{$k=2\xrightarrow{} k_{max}$}
                \State \texttt{CheckCandidate}$[n, \sigma_n, \alpha_{max}, k]$
            \EndFor
        \EndFor
        \\
        \Return $sum$
    \EndProcedure
    \end{algorithmic}
\end{algorithm}
Note that by computing $\sigma_n$ only once for each $n$, considerable computing time is gained, given that this operation is the most expensive one in the algorithm in terms of computing resources.

\section{Conclusion and further work}
In this paper we extended our previous work on odd spoof multiperfect numbers and found several new examples of odd positive integers that would be multiperfect, if only one of their prime factors had higher multiplicity. Since our algorithm is simple, it can easily be used to discover other examples with sufficient computing resources, and indeed we hope that the present work will encourage others to do so.

\section*{Acknowledgment}
The author extends his gratitude to several anonymous reviewers, whose comments and remarks have improved the quality of this paper.

\end{document}